\documentclass[preprint, 12pt]{elsarticle}



\usepackage{graphicx}
\usepackage{subcaption}
\usepackage{amssymb, amsmath}
\usepackage{amsthm}
\usepackage{color}
\usepackage{array}
\usepackage{tabu}







\journal{Tr. Mat. Inst. Steklova}

\newtheorem{theorem}{Theorem}[section]
\newtheorem{lemma}[theorem]{Lemma}

\theoremstyle{definition}
\newtheorem{definition}[theorem]{Definition}

\theoremstyle{remark}
\newtheorem{remark}[theorem]{Remark}

\begin{document}

\begin{frontmatter}


\title{Some results on the existence of forced oscillations in mechanical systems}



\author{Ivan Polekhin}

\address{Steklov Mathematical Institute, Moscow, Russia}

\begin{abstract}
We present sufficient conditions for the existence of forced oscillations in non-autonomous mechanical systems. Previously, similar results were obtained for systems with friction. Presented results hold both for systems with and without friction. Some examples are given.\\
\textbf{Keywords:} forced oscillations, periodic solutions, inverted pendulum, geodesic, Nagumo condition, Lefschetz-Hopf theorem, Wa{\.z}ewski method
\end{abstract}
\end{frontmatter}


\section{Introduction}
\label{S:1}
Forced oscillations in systems of ordinary differential equations have been studied extremely intensively by many authors. Among all the works, a significant number of works are related to proving the existence of forced oscillations for the mathematical pendulum and pendulum-like systems, i.e. for systems that in some sense are similar to the pendulum. The first result on the matter was obtained by Georg Hamel \cite{hamel1922erzwungene}. He showed that equation
$$
\ddot x + g \cos x = c \sin t
$$
has a $2\pi$-periodic solution for any $c$. The proof is based on an application of the direct method of variational calculus and can be easily applied to the system where the right hand side is a relatively regular function with zero mean value. A detailed exposition of the papers of the second half of the 20th century related to the matter can be found in  \cite{mawhin1997seventy}.

In \cite{polekhin2014examples} (also see \cite{polekhin2014periodic}) it was shown that a $2\pi$-periodic solution also exists in the following system
$$
\ddot x + g \cos x + f(t) \sin t = 0,
$$
that describes the motion of a pendulum such that its pivot point is moving horizontally with acceleration $f(t)$ and this function is $2\pi$-periodic and regular. Moreover, it has been shown that for this solution we have $x(t) \in (0, \pi)$ for all $t$. In other words, for any relatively regular motion of the pivot, there exists a solution such that, along this solution, the rod of the pendulum is always above the horizontal line.

Similar result holds for the spherical pendulum provided the rod moves with friction (can be arbitrarily small). More general results have been proved in \cite{polekhin2015forced} and \cite{polekhin2016forced}. In particular, in \cite{polekhin2015forced} it has been shown the following. Let $N$ be a two-dimensional connected compact surface and there is a mass point moving on this surface. Let $M \subset N$ be a two-dimensional compact connected surface with a boundary. Surfaces $M$, $N$ and boundary $\partial M$ are supposed to be smooth manifolds embedded in $\mathbb{R}^3$. The equation of motion has the following general form:
$$
m \ddot r = F + F_{\textup{friction}} + F_{\textup{constraint}}.
$$
Here $m$ is the mass of the point, $r$ is the radius-vector, $F$ is an external force that can depend on $r$, $\dot r$ and time $t$, and this function is $T$-periodic in time, $F_{\textup{friction}}$ is the force of viscous friction with friction coefficient $\mu > 0$, $F_{\textup{constraint}}$ is the force of constraint directed along the normal vector to the surface. Let the Euler characteristic of $M$ is non-zero and all solutions starting at $\partial M$ and initially tangent to boundary $\partial M$ leave $M$ for some time interval $(0, \varepsilon]$. Then for any $\mu > 0$ there exists a $T$-periodic solution that remains in $M \setminus \partial M$ (provided some additional assumptions --- that we omit here --- are satisfied).

In \cite{bolotin2015calculus} Bolotin and Kozlov proved the following. Let us have a system with the following Lagrangian function
$$
L(q,\dot q, t) = L_2(q, \dot q, t) + L_1(q, \dot q, t) + L_0(q, t),
$$
where, as usual, $L_2$ is a positive definite quadratic form in velocity, $L_1$  is a linear in velocity form that defines gyroscopic forces. We assume that the Lagrangian function is $T$-periodic in time. Let manifold $N$ be the configuration space of the system. Let $M \subset N$ be a smooth manifold with boundary. Similar to the above, we assume that any solution starting at $\partial M$ ant tangent to the boundary
locally leaves $M$. Then in any homotopy class of free closed loops in $M$ there exists a $T$-periodic solution that always remains in $M \setminus \partial M$.

From \cite{bolotin2015calculus} it follows that there can be several  $T$-periodic solutions, However, this result cannot be applied to systems with friction. At the same time, results from \cite{polekhin2015forced} and \cite{polekhin2016forced} can be applied only to systems with friction (even though this friction can be chosen to be arbitrarily weak). The main aim of the paper is to prove results on the existence of forced oscillations for systems without friction that can be non-Hamiltonian.  

\section{Auxiliary results and definitions}

Everywhere below --- even if it is not mentioned explicitly --- we assume that all geometrical objects (functions, manifolds, vector fields) are infinitely smooth. We will use the result from~\cite{srzednicki2005fixed} (also, see \cite{srzednicki1994periodic}) based on the idea of the topological Wa{\.z}ewski method \cite{wazewski1947principe} and the Lefschetz--Hopf theorem. Let us now present the main result from~\cite{srzednicki2005fixed}, which we slightly modify for our use.

Let $v \colon \mathbb{R}\times M \to TM$ be a smooth non-autonomous vector field on a manifold $M$. In particular, the standard conditions for the existence and uniqueness for solutions are satisfied for equation
\begin{equation}
\label{eq1}
\dot x = v(t, x).
\end{equation}
For $t_0 \in \mathbb{R}$ and $x_0 \in M$ the mapping $t \mapsto x(t,t_0,x_0)$ is defined by the solution of the initial value problem for~(\ref{eq1}) with initial condition $x(0,t_0,x_0)=x_0$. For $W \subset \mathbb{R}\times M$ and $t\in\mathbb{R}$ we use the following notation 
\begin{equation*}
W_t=\{x \in M \colon (t,x) \in W\}.
\end{equation*}

\begin{definition}
Let $W \subset \mathbb{R} \times M$. Define the {exit set} $W^-$ as follows. A point $(t_0,x_0)$ is in $W^-$ if there exists $\delta>0$ such that $(t+t_0, x(t,t_0,x_0)) \notin W$ for all $t \in (0,\delta)$.
\end{definition}
\begin{definition}
We call $W \subset \mathbb{R}\times M$ a {Wa\.{z}ewski block} for the system $\dot x = v(t,x)$ if $W$ and $W^-$ are compact.
\end{definition}
 Now introduce some notations. By $\pi_1$ and $\pi_2$ we denote the projections of $\mathbb{R}\times M$ onto $\mathbb{R}$ and $M$ respectively. 
\begin{definition}
A set $W \subset [a,b] \times M$ is called a segment over $[a,b]$ if it is a block with respect to the system $\dot x = v(t,x)$ and the following conditions hold:
\begin{itemize}
\item there exists a compact subset $W^{--}$ of $W^-$ called the essential exit set such that
\begin{equation*}
W^-=W^{--}\cup(\{b\}\times W_b),\quad W^-\cap([a,b)\times M) \subset W^{--},
\end{equation*}
\item there exists a homeomorphism $h\colon [a,b]\times W_a \to W$ such that $\pi_1 \circ h = \pi_1$ and
\begin{equation}
\label{cond-2}
h([a,b]\times W_a^{--})=W^{--}.
\end{equation}
\end{itemize}
\end{definition}
\begin{definition}
Let $W$ be a segment over $[a,b]$. It is called periodic if
\begin{equation*}
(W_a,W_a^{--})=(W_b,W_b^{--}).
\end{equation*}
\end{definition}
\begin{definition}
For periodic segment $W$, we define the corresponding monodromy map $m$ as follows
\begin{equation*}
m\colon W_a\to W_a, \quad m(x) = \pi_2 h(b,\pi_2 h^{-1}(a,x)).
\end{equation*}
\end{definition}
\begin{remark}
Map $m$ is a homemorphism. Moreover, it can be shown that for various $h$ satisfying  (\ref{cond-2}), we have the same (up to homotopy) map $m$. Hence, the isomorphism of homology groups $m_*$ does not depend on the choice of $h$.
\end{remark}
\begin{theorem} 
\label{main-th} Let W be a periodic segment over $[a,b]$. Then the set
\begin{equation*}
U = \{ x_0 \in W_a \colon x(a,x_0,t) \in W_t\setminus W_t^{--}\,\mbox{for all}\,\, t \in [a,b] \}
\end{equation*}
is open in $W_a$ and the set of fixed point of the restriction $x(a,\cdot, b)|_U \colon U \to W_a$ is compact. Moreover, if $W$ and $W^{--}$ are ANRs \cite{dold2012lectures} then
\begin{equation*}
\mathrm{ind}(x(a,\cdot, b)|_U) = \Lambda(m) - \Lambda(m|_{W_a^{--}}).
\end{equation*}
Where by $\Lambda(m)$ and $\Lambda(m|_{W_a^{--}})$ we denote the Lefschetz number of $m$ and $m|_{W_a^{--}}$ respectively. In particular, if $\Lambda(m) - \Lambda(m|_{W_a^{--}}) \ne 0$ then $x(a,\cdot, b)|_U$ has a fixed point in $W_a$.
\end{theorem}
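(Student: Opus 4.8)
The plan is to reprove the statement along the lines of Srzednicki, reducing it to two groups of facts: soft properties of the Wa\.{z}ewski construction (continuous dependence of solutions on initial data together with the fact that a solution issuing from a point of the exit set leaves $W$ immediately), and the fixed point index together with the Lefschetz--Hopf theorem for self-maps of compact ANRs.

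First I would dispose of the openness of $U$ and the compactness of the fixed point set by soft arguments. Since $W$ and $W^{--}$ are compact, hence closed, and $W^-\cap([a,b)\times M)\subseteq W^{--}$, a point $x_0\in W_a$ fails to belong to $U$ precisely when its solution meets $W^{--}$ at some $t\in[a,b]$; note that meeting $W^{--}\subseteq W^-$ at a time $t<b$ forces the solution to leave $W$ at once, and that a solution remaining in the compact set $W$ is defined on all of $[a,b]$. A standard limiting argument based on continuous dependence then shows that the complement of $U$ in $W_a$ is closed, so $U$ is open. A similar limiting argument, now also using the periodicity $(W_a,W_a^{--})=(W_b,W_b^{--})$ and again the immediate-exit property, shows that $\mathrm{Fix}(x(a,\cdot,b)|_U)$ has no limit points in $W_a\setminus U$; being closed in $U$ and contained in the compact $W_a$, it is compact, so the index $\mathrm{ind}(x(a,\cdot,b)|_U)$ is defined. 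One also needs $W_a$ and $W_a^{--}$ to be ANRs when $W$ and $W^{--}$ are: this holds because $W_a$ is a retract of $W\cong[a,b]\times W_a$ (and $W_a^{--}$ of $W^{--}\cong[a,b]\times W_a^{--}$) via $h$, and a retract of an ANR is an ANR.

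The heart of the proof is the index identity, and here I would introduce an auxiliary self-map $\Phi\colon W_a\to W_a$ of the pair $(W_a,W_a^{--})$. Given $x_0$, run the solution forward from time $a$: if it stays in $W$ up to time $b$, reaching $(b,u)$, set $\Phi(x_0)=u\in W_b=W_a$; if it first leaves $W$ at a time $\tau<b$, its exit point lies in $W^-\cap([a,b)\times M)\subseteq W^{--}=h([a,b]\times W_a^{--})$, so writing that point as $h(\tau,w)$ with $w\in W_a^{--}$ we set $\Phi(x_0)=\pi_2 h(b,w)\in W_b^{--}=W_a^{--}$. The first delicate point is the continuity of $\Phi$: on the set where the solution leaves $W$ before time $b$ it rests on the continuity of the first-exit-time function of a block --- itself a classical Wa\.{z}ewski fact deduced only from continuous dependence and the closedness/immediate-exit of $W^-$ --- while the gluing of the two cases follows from a compactness argument like the one above. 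Using $h$ one then checks directly that $\Phi(W_a^{--})\subseteq W_a^{--}$, that $\Phi$ agrees with $x(a,\cdot,b)$ on the open set $U$, and that $\Phi$ is homotopic, as a map of pairs, to the monodromy $m$: a homotopy is $\lambda\mapsto\bigl(x_0\mapsto\pi_2 h\bigl(b,F(a+\lambda(b-a),x_0)\bigr)\bigr)$, where $F(s,x_0)$ is defined like $\Phi$ but stopping the solution at time $s$ and reading the $W_a$-coordinate of the resulting point of $W$ through $h^{-1}$; then $F(a,\cdot)=\pi_2 h^{-1}(a,\cdot)$, so the endpoint at $\lambda=0$ is $m$, while the endpoint at $\lambda=1$ is $\Phi$.

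It remains to assemble the count. By the Lefschetz--Hopf theorem on the compact ANR $W_a$, together with homotopy invariance of the Lefschetz number, $\mathrm{ind}(\Phi,W_a)=\Lambda(\Phi)=\Lambda(m)$, and likewise on $W_a^{--}$, $\mathrm{ind}(\Phi|_{W_a^{--}},W_a^{--})=\Lambda(\Phi|_{W_a^{--}})=\Lambda(m|_{W_a^{--}})$. Since $\mathrm{Fix}(\Phi)$ is the disjoint union of the two compact sets $\mathrm{Fix}(\Phi|_{W_a^{--}})$ and $\mathrm{Fix}(\Phi)\setminus W_a^{--}=\mathrm{Fix}(x(a,\cdot,b)|_U)$, additivity of the fixed point index splits $\mathrm{ind}(\Phi,W_a)$ over disjoint neighbourhoods of these two sets; on a small neighbourhood of $\mathrm{Fix}(x(a,\cdot,b)|_U)$ contained in $U$ one has $\Phi=x(a,\cdot,b)$, and on a neighbourhood of $\mathrm{Fix}(\Phi|_{W_a^{--}})$ the commutativity (contraction) property of the index, applied to a neighbourhood retraction of the ANR $W_a^{--}$ in $W_a$, identifies the local index with $\mathrm{ind}(\Phi|_{W_a^{--}},W_a^{--})$. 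This yields $\Lambda(m)=\Lambda(m|_{W_a^{--}})+\mathrm{ind}(x(a,\cdot,b)|_U)$, which is the asserted formula, and the final sentence of the theorem then follows at once, since a nonzero fixed point index forces a fixed point. I expect the genuine obstacle to be precisely this topological step --- building the continuous auxiliary map $\Phi$, checking its homotopy to the monodromy, and invoking the correct additivity and commutativity properties of the fixed point index --- the ODE-theoretic parts (openness of $U$ and compactness of the fixed point set) being comparatively routine.
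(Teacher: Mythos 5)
The paper does not actually prove Theorem~\ref{main-th}: it is stated as quoted from Srzednicki \cite{srzednicki2005fixed,srzednicki1994periodic}, so there is no in-paper proof to compare against, and I will assess your argument on its own.

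Your architecture --- the auxiliary self-map $\Phi$, the homotopy of pairs from $\Phi$ to the monodromy $m$, the Lefschetz--Hopf theorem and the additivity of the fixed point index --- is the correct one and is in the spirit of the cited source. The soft parts are handled correctly: openness of $U$ and compactness of the fixed-point set do reduce to continuous dependence, closedness of $W$ and $W^{--}$, the immediate-exit property, and periodicity; $W_a$ and $W_a^{--}$ are ANRs because they are retracts (through $h$) of the ANRs $W$ and $W^{--}$; and the disjoint decomposition $\mathrm{Fix}(\Phi) = \mathrm{Fix}(\Phi|_{W_a^{--}}) \sqcup \mathrm{Fix}\bigl(x(a,\cdot,b)|_U\bigr)$ is right, since any fixed point of $\Phi$ outside $W_a^{--}$ would be mapped into $W_a^{--}$ if its trajectory exited before time $b$, hence must lie in $U$.

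The step I would flag as a genuine gap is the last one, the local index near $W_a^{--}$. You invoke ``the commutativity (contraction) property of the index, applied to a neighbourhood retraction of the ANR $W_a^{--}$ in $W_a$''. The mere existence of such a retraction $r\colon N \to W_a^{--}$ does not yield $\mathrm{ind}(\Phi,V_2)=\Lambda(\Phi|_{W_a^{--}})$. A minimal counterexample: $X=[0,1]$, $A=\{0\}$, $f(x)=2x$ near $0$; then $f(A)\subseteq A$ and $A$ is a neighbourhood retract, yet $\mathrm{ind}(f,V)=-1$ while $\Lambda(f|_A)=1$. What actually makes the contraction property applicable is the stronger, and ODE-specific, fact that $\Phi$ \emph{itself} maps a sufficiently small neighbourhood $V_2$ of $\mathrm{Fix}(\Phi|_{W_a^{--}})$ into $W_a^{--}$: the first-exit time is continuous on a block and vanishes on $W_a^{--}$, so points of $V_2$ exit $W$ through $W^{--}$ before time $b$; the exit point, read through $h$, is a point of $W_a^{--}$, and $\Phi$ sends it to $\pi_2 h(b,\cdot)$ of that point, which lies in $W_b^{--}=W_a^{--}$. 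With $\Phi(V_2)\subseteq W_a^{--}$ established, the contraction property gives $\mathrm{ind}(\Phi,V_2)=\mathrm{ind}(\Phi|_{V_2\cap W_a^{--}},V_2\cap W_a^{--})$, and excision plus the Lefschetz--Hopf theorem on the compact ANR $W_a^{--}$ then give $\Lambda(\Phi|_{W_a^{--}})=\Lambda(m|_{W_a^{--}})$ as required. You did flag this area as the genuine obstacle; the missing ingredient is precisely this factorization of $\Phi$ through $W_a^{--}$ on a neighbourhood, which is a consequence of the Wa\.{z}ewski exit structure and not of the retraction alone.
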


\begin{remark}
Let $W$ be as follows $W = [a, b] \times Z$, where $Z \subset M$, and $W^-_{t_1} = W^-_{t_2}$ for any $t_1, t_2 \in [a, b)$ and $W^{--}$ has the form 
$$
W^{--} = [a,b] \times W_a^-.
$$
Then $m$ is the identity map and the Lefschetz numbers equal to the Euler characteristics:
$$
\Lambda(m) - \Lambda(m|_{W_a^{--}}) = \chi(W_a) - \chi(W^-_a)= \chi(W_a) - \chi(W^{--}_a).
$$
\end{remark}
\begin{remark}
The requirement for $W$ and $W^{--}$ to be absolute neighborhood retracts significantly exceeds the needs of our applications. Below $W$ and $W^{--}$ are sets with piecewise smooth border.
\end{remark}
In conclusion of the section, let us prove a lemma from mechanics. Let again $M$ be an $n$-dimensional smooth manifold and $T$ is a positive definite quadratic form in velocity $\dot q$ defined on it:
\begin{align}
\label{quad-form}
    T = T(q, \dot q) = \frac{1}{2} A(q) \dot q \cdot \dot q.
\end{align}
This form defines a Riemannian metric $\langle \cdot, \cdot \rangle$ on $M$. Let us consider the following equation of motion
\begin{align}
\label{eq2}
    \nabla_{\dot q} \dot q = v(t, q, \dot q),
\end{align}
where, in local coordinates, $v(t, q, \dot q)$ are smooth functions of their arguments. When  $v \equiv 0$, we have the geodesic equation for metric $\langle \cdot, \cdot \rangle$
\begin{align}
\label{eq3}
    \nabla_{\dot q} \dot q = 0.
\end{align}
\begin{lemma}
\label{main-lm}
Let $q \colon U \to D$, $U \subset M$ be local coordinates in a region $U$ on $M$, $D \subset \mathbb{R}^n$ is a bounded open region in $\mathbb{R}^n$. We assume that in local coordinates all components of $v(t, q, \dot q)$ satisfy the following condition for all $t$, $q$ and $\dot q$
\begin{align}
\label{usl-rost}
    \| v(t, q, \dot q) \| \leqslant a + b\| \dot q \|^{2- \delta}
\end{align}
for some $\delta > 0$ and some $a$ and $b$. Let solution $q(t)$ of (\ref{eq3}) with initial conditions $q(0) = q_0 \in D$ and $\dot q(0) = \dot q_0 \ne 0$ be in $D$ for all $t$ in $[0,T]$. Then for any $\varepsilon > 0$ there exists $\lambda > 0$ such that for solutions $q_1(t)$ and $q_2(t)$ of (\ref{eq2}) and (\ref{eq3}) with initial conditions $q_1(0) = q_2(0) = q_0$, $\dot q_1(0) = \dot q_2(0) = \lambda \dot q_0$ we have
$$
\| q_1(t) - q_2(t) \| < \varepsilon, \quad t \in [0, T/\lambda].
$$
\end{lemma}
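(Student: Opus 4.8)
The plan rests on one elementary but decisive observation: geodesics are invariant under affine reparametrisation of time. If $q(\cdot)$ solves (\ref{eq3}) then so does $t\mapsto q(\lambda t)$, and its velocity at $t=0$ is $\lambda\dot q_0$. Hence, by uniqueness, the solution $q_2$ of (\ref{eq3}) with $q_2(0)=q_0$, $\dot q_2(0)=\lambda\dot q_0$ is exactly $q_2(t)=q(\lambda t)$: it traces the same curve as $q$ and stays in $D$ for all $t\in[0,T/\lambda]$. So the task reduces to showing that, once $\lambda$ is large, the forced solution $q_1$ stays within $\varepsilon$ of this reparametrised geodesic on $[0,T/\lambda]$.

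To exploit this I would rescale time in (\ref{eq2}) as well. Putting $Q(s)=q_1(s/\lambda)$, a direct computation in the coordinates $q\colon U\to D$ (using $d/dt=\lambda\,d/ds$ and the fact that the Christoffel symbols depend on $q$ only) shows that $Q$ solves
\begin{equation*}
\nabla_{Q'}Q' \;=\; \frac{1}{\lambda^{2}}\, v\!\left(\frac{s}{\lambda},\, Q,\, \lambda Q'\right),\qquad Q(0)=q_0,\quad Q'(0)=\dot q_0,
\end{equation*}
i.e. the geodesic equation perturbed by a term that, by the growth bound (\ref{usl-rost}), obeys $\bigl\| \lambda^{-2} v(s/\lambda, Q, \lambda Q')\bigr\| \leqslant a\lambda^{-2} + b\lambda^{-\delta}\| Q'\|^{\,2-\delta}$. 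On any region where $\|Q'\|$ stays bounded this perturbation tends to $0$ uniformly as $\lambda\to\infty$, and here the hypothesis $\delta>0$ (growth strictly slower than quadratic) is indispensable. In the $s$-variable the reparametrised geodesic is just $q$ itself, since $q(s)=q_2(s/\lambda)$, so it remains to compare $Q$ with $q$ on the fixed interval $[0,T]$.

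For that comparison I would fix a compact neighbourhood $K\subset D$ of the compact set $q([0,T])$. The rescaled velocity $Q'$ is controlled directly from metric-compatibility of the Levi-Civita connection: $\tfrac{d}{ds}\langle Q',Q'\rangle = 2\langle\nabla_{Q'}Q',Q'\rangle$, whence $\tfrac{d}{ds}\|Q'\|\leqslant\bigl\|\lambda^{-2}v(s/\lambda,Q,\lambda Q')\bigr\|$ in the Riemannian norm; as long as $Q$ stays in $K$, (\ref{usl-rost}) turns this into a scalar differential inequality $\tfrac{d}{ds}\|Q'\|\leqslant A_\lambda+B_\lambda\|Q'\|^{2-\delta}$ with $A_\lambda,B_\lambda\to0$, and since $2-\delta<2$ the blow-up time of the comparison equation $y'=A_\lambda+B_\lambda y^{2-\delta}$ goes to infinity as $\lambda\to\infty$. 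Thus, for $\lambda$ large, $\|Q'\|$ stays close to the constant speed $\|\dot q_0\|$ of the geodesic throughout $[0,T]$; then Gronwall's inequality applied to the first-order systems attached to (\ref{eq2}) and (\ref{eq3}) yields $\|Q(s)-q(s)\|+\|Q'(s)-q'(s)\|\leqslant C\,(A_\lambda+B_\lambda)\to0$, with $C$ depending only on $K$ and $T$. For $\lambda$ large this bound is $<\min\{\varepsilon,\mathrm{dist}(q([0,T]),\partial K)\}$, which both keeps $Q$ inside $K$ (closing the argument by a standard continuation) and forces $Q$, hence $q_1$, to be defined on $[0,T]$. Undoing the rescaling, $\|q_1(t)-q_2(t)\|=\|Q(\lambda t)-q(\lambda t)\|<\varepsilon$ for all $t\in[0,T/\lambda]$.

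I expect the last paragraph — controlling the rescaled velocity and running the continuation argument in tandem — to be the only genuinely delicate point. A priori there is no bound on $\|\dot q_1\|$, and the forcing involves $\|\dot q_1\|^{2-\delta}$, so one must verify that the smallness of the rescaled perturbation (carried by the factor $\lambda^{-\delta}$) is restored before the velocity can leave the region where the Gronwall estimate is valid. The affine-rescaling trick together with the cancellation of $\lambda^{2}$ against the sub-quadratic growth is exactly what makes this go through; with $\delta=0$ the argument collapses.
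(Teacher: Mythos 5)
Your proof is correct and follows essentially the same route as the paper: rescale time by $\lambda$, observe that the forcing term becomes $\lambda^{-2}v(s/\lambda,Q,\lambda Q')$ whose norm is $O(\lambda^{-2})+O(\lambda^{-\delta})\|Q'\|^{2-\delta}$, and conclude by continuous dependence. You supply the a priori velocity bound (via metric compatibility and the sub-quadratic comparison ODE) and the Gronwall/continuation step that the paper compresses into the sentence ``This follows from (\ref{usl-rost}),'' so yours is a fleshed-out version of the same argument rather than a different one.
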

\begin{proof}
Let us consider a new time variable $t = \tau/\lambda$. Equation (\ref{eq2}) has the form
\begin{align}
\label{eq4}
    \nabla_{q'} q' = \frac{1}{\lambda^2}v(\tau/\lambda, q, \lambda q'),
\end{align}
where $(\cdot)'$ is the derivative by $\tau$. Equation (\ref{eq3}) does not change:
\begin{align}
\label{eq5}
    \nabla_{q'} q' = 0.
\end{align}
Let us denote by $Q_1(\tau)$ and $Q_2(\tau)$, the corresponding solutions of (\ref{eq4}) and (\ref{eq5}) with initial conditions $Q_1(0) =  Q_2(0) = q_0$, $ Q'_1(0) =  Q'_2(0) = \dot q_0$.
From the statement of the lemma, we have that $Q_2(\tau)$ in $D$ for $\tau \in [0,  T]$. If  $\lambda$ is large enough, the solutions of (\ref{eq4}) and (\ref{eq5}) with the above initial conditions are arbitrarily close for $t$ in $[0, T]$. This follows from (\ref{usl-rost}). In particular,
$$
\| Q_1(\tau) -  Q_2(\tau) \| < \varepsilon, \quad \tau \in [0, T].
$$
Since $q_1(t) = Q_1(\lambda t)$ and $q_2(t) = Q_2(\lambda t)$, then
$$
\| q_1( t) - q_2( t) \| = \| Q_1(\lambda t) - Q_2(\lambda t) \|  < \varepsilon, \quad t \in [0, T/\lambda].
$$
\end{proof}

Therefore, we have shown that for relatively large initial velocity, the trajectory of solution for (\ref{eq4}) is close to the corresponding geodesic (for some time interval). Inequality (\ref{usl-rost}) is important to the proof.

\begin{remark}
Below, we will use the following observation: if for $\lambda$ the corresponding solutions of equations (\ref{eq2}) and (\ref{eq3}) are close, then the same is true if we change the right hand side of (\ref{eq2}) as follows
\begin{align}
\label{eq2-new}
    \nabla_{\dot q} \dot q = w(t, q, \dot q),
\end{align}
provided for all $t$, $q$ and $\dot q$ we have
\begin{align}
    \| w(t, q, \dot q) \| \leqslant \| v(t, q, \dot q) \|.
\end{align}
This follows from the continuous dependence of solutions from the right hand side of the equation.
\end{remark}

\section{Main results}

In \cite{polekhin2014examples} it was shown that, for the system describing the motion of a planar mathematical pendulum with a periodically horizontally moving pivot point, there is a periodic solution with the same period such that the rod of the pendulum is never horizontal along this solution. To be more precise, let us have te following system
\begin{align}
\label{eq6}
    \ddot q = f(t) \cdot \sin q - \cos q.
\end{align}
Where $f(t)$ is $T$-periodic. Then there exists a solution $q(t)$ such that
\begin{enumerate}
    \item $q(t) \in (0, \pi)$ for all $t$,
    \item $q(t + T) = q(t)$ for all $t$.
\end{enumerate}
The proof is based on a result from \cite{srzednicki2005fixed} presented above in a simplified form. In the proof, we use that it is possible to choose a periodic segment for system (\ref{eq6}). Qualitative picture of the segment is in Fig.~1.
\begin{figure}[h!]
\centering
\def\svgwidth{200 pt}
\begingroup%
  \makeatletter%
  \providecommand\color[2][]{%
    \errmessage{(Inkscape) Color is used for the text in Inkscape, but the package 'color.sty' is not loaded}%
    \renewcommand\color[2][]{}%
  }%
  \providecommand\transparent[1]{%
    \errmessage{(Inkscape) Transparency is used (non-zero) for the text in Inkscape, but the package 'transparent.sty' is not loaded}%
    \renewcommand\transparent[1]{}%
  }%
  \providecommand\rotatebox[2]{#2}%
  \newcommand*\fsize{\dimexpr\f@size pt\relax}%
  \newcommand*\lineheight[1]{\fontsize{\fsize}{#1\fsize}\selectfont}%
  \ifx\svgwidth\undefined%
    \setlength{\unitlength}{451.61904176bp}%
    \ifx\svgscale\undefined%
      \relax%
    \else%
      \setlength{\unitlength}{\unitlength * \real{\svgscale}}%
    \fi%
  \else%
    \setlength{\unitlength}{\svgwidth}%
  \fi%
  \global\let\svgwidth\undefined%
  \global\let\svgscale\undefined%
  \makeatother%
  \begin{picture}(1,0.75404713)%
    \lineheight{1}%
    \setlength\tabcolsep{0pt}%
    \put(0,0){\includegraphics[width=\unitlength,page=1]{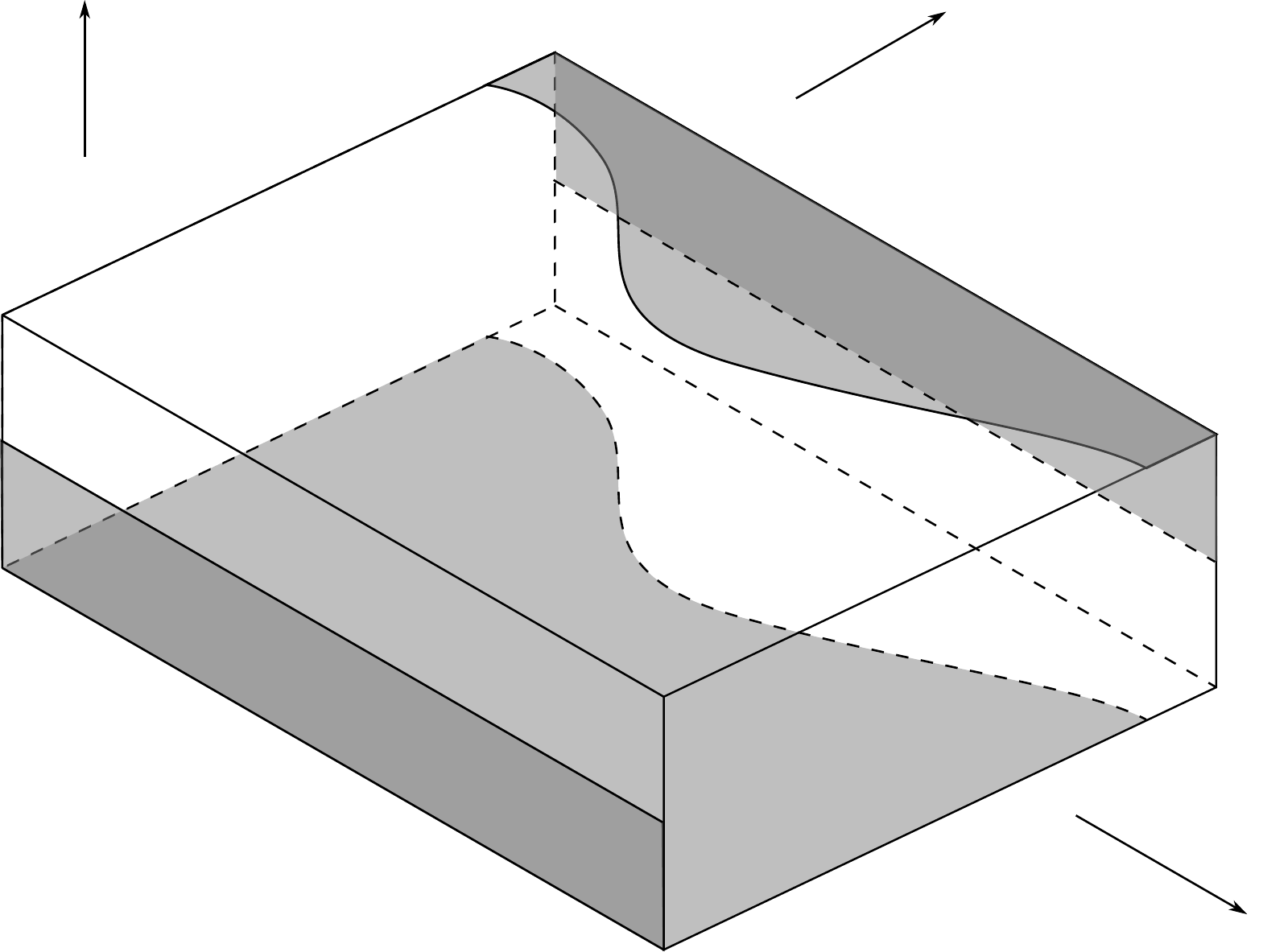}}%
    \put(0.3494258,0.38660335){\color[rgb]{0,0,0}\makebox(0,0)[lt]{\lineheight{1.25}\smash{\begin{tabular}[t]{l}$\gamma(t)$\end{tabular}}}}%
    \put(0.69105387,0.67247953){\color[rgb]{0,0,0}\makebox(0,0)[lt]{\lineheight{1.25}\smash{\begin{tabular}[t]{l}$q$\end{tabular}}}}%
    \put(0.08371515,0.68078299){\color[rgb]{0,0,0}\makebox(0,0)[lt]{\lineheight{1.25}\smash{\begin{tabular}[t]{l}$\dot q$\end{tabular}}}}%
    \put(0.95795077,0.06514083){\color[rgb]{0,0,0}\makebox(0,0)[lt]{\lineheight{1.25}\smash{\begin{tabular}[t]{l}$t$\end{tabular}}}}%
    \put(0.21419806,0.50403797){\color[rgb]{0,0,0}\makebox(0,0)[lt]{\lineheight{1.25}\smash{\begin{tabular}[t]{l}$W$\end{tabular}}}}%
    \put(0.61869514,0.14343063){\color[rgb]{0,0,0}\makebox(0,0)[lt]{\lineheight{1.25}\smash{\begin{tabular}[t]{l}$W^{--}$\end{tabular}}}}%
  \end{picture}%
\endgroup%

\caption{Periodic segment for the pendulum with a moving pivot point. The exit set is highlighted ($W^{--}$).}
\label{fig1}
\end{figure}
In other words, there exists a parallelepiped in the extended phase space such that $W^{--}$ is compact and has the following form:
 $$W^{--} = W^{--}_{left} \cup W^{--}_{right} \cup W^{--}_{top} \cup W^{--}_{bottom},$$
where
\begin{align*}
    \begin{split}
        &W^{--}_{right} = \{ t, q, \dot q \colon t \in [0, T], q = 0, \dot q \in [-p, 0] \},\\
        &W^{--}_{left} = \{ t, q, \dot q \colon t \in [0, T], q = \pi, \dot q \in [0, p] \},\\
        &W^{--}_{top} = \{ t, q, \dot q \colon t \in [0, T], q \in [\gamma(t),\pi], \dot q = p \},\\
        &W^{--}_{bottom} = \{ t, q, \dot q \colon t \in [0, T], q \in [0,\gamma(t)], \dot q = -p \}.
    \end{split}
\end{align*}
Here $p > 0$ is a relatively large number, for given $t$, the value $\gamma(t) \in (0, \pi)$ satisfies condition $\mathrm{ctg} \gamma(t) = f(t)$.

If one considers the motion of the point not along a circle, but on an arbitrary smooth curve, then the equations of motion have the form:
\begin{align*}
    \ddot q = f(t) \cdot x'(q) - y'(q),
\end{align*}
where $(x(q), y(q))$ is a natural parametrization of the curve. The point is moving in a gravitational field and in a horizontal non-autonomous external force field. For this problem, set $$W = \{t, q, \dot q \colon t \in [0,T], q \in [0, \pi], \dot q \in [-p, p]\}$$ may not be a periodic segment: solutions can be internally tangent to the boundary of $W$. Nevertheless, $W$ is a periodic segment for the system with friction:
\begin{align*}
    \ddot q = f(t) \cdot x'(q) - y'(q) - \mu \dot q,
\end{align*}
where $\mu > 0$ is an arbitrary real number. Below we show that this system has a $T$-periodic solution even when $\mu = 0$.
\begin{remark}
Note that the existence of a periodic solution for $\mu = 0$ cannot be obtained by a passage to the limit, since the size of $W$ is unbounded as $\mu$ tends to zero.
\end{remark}
\noindent Let us consider the following equation
\begin{align}
\label{eq7}
    \ddot x = v(t, x, \dot x),
\end{align}
where $v \colon \mathbb{R} \times \mathbb{R} \times \mathbb{R} \to \mathbb{R}$ is a smooth function.
\begin{theorem}
\label{3_2}
Let the right hand side of (\ref{eq7}) be a $T$-periodic function of time and there exist two $T$-periodic functions $x_1(t)$ and $x_2(t)$ such that for all $t$ we have
\begin{enumerate}
    \item $x_1(t) < x_2(t)$,
    \item $\ddot x_1 > v(t, x_1(t), \dot x_1(t))$,
    \item $\ddot x_2 < v(t, x_2(t), \dot x_2(t))$.
\end{enumerate}
Assume that $v(t, x, \dot x) \leqslant a + b|\dot x|^{2 - \delta}$ for some $a$ and $b$ and $\delta > 0$. Then for system (\ref{eq7}) there exists a periodic solution $x(t)$ such that $x_1(t) < x(t) < x_2(t)$ for all $t$.
\end{theorem}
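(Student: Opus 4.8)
The plan is to normalise the strip by an affine change of variable, then build an explicit periodic segment whose velocity cut‑off is a curved barrier (not a flat wall), and finally apply Theorem~\ref{main-th} together with the first remark following it.

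First I would reduce to a fixed strip. With $\Delta(t)=x_2(t)-x_1(t)>0$ and $y=(x-x_1(t))/\Delta(t)$, equation (\ref{eq7}) becomes $\ddot y=\hat v(t,y,\dot y)$, where
$$
\hat v(t,y,\dot y)=\tfrac1\Delta\bigl(v(t,x_1+\Delta y,\ \dot x_1+\dot\Delta\,y+\Delta\,\dot y)-\ddot x_1-\ddot\Delta\,y-2\dot\Delta\,\dot y\bigr)
$$
is smooth and $T$-periodic in $t$. Since $\dot x$ is affine in $\dot y$ and all data are $T$-periodic, the hypothesis $v\le a+b|\dot x|^{2-\delta}$ gives $\hat v(t,y,\dot y)\le A+B|\dot y|^{2-\delta}$ on $[0,T]\times[0,1]$, and I may decrease $\delta$ so that $2-\delta\in[1,2)$. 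Also $\hat v(t,0,0)=\Delta^{-1}(v(t,x_1,\dot x_1)-\ddot x_1)<0$ and $\hat v(t,1,0)=\Delta^{-1}(v(t,x_2,\dot x_2)-\ddot x_2)>0$ for all $t$, by conditions 2 and 3. It then suffices to produce a $T$-periodic solution with $0<y(t)<1$ for all $t$.

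Next I would construct the barrier. Let $\beta\colon[0,1]\to(0,\infty)$ solve $\beta'=A/\beta+B\beta^{1-\delta}+1$ with $\beta(0)=1$; since $1-\delta\in[0,1)$ the right-hand side grows at most linearly in $\beta$, so $\beta$ exists and is positive and increasing on $[0,1]$. Set
$$
W=\{(t,y,\dot y)\colon t\in[0,T],\ 0\le y\le 1,\ |\dot y|\le\beta(y)\}=[0,T]\times W_0\subset[0,T]\times\mathbb{R}^2,
$$
the phase space being $\{(y,\dot y)\}$. The role of the choice of $\beta$ is that on the upper velocity boundary $\{\dot y=\beta(y)\}$ one has $\frac{d}{dt}(\dot y-\beta(y))=\hat v(t,y,\beta(y))-\beta'(y)\beta(y)\le(A+B\beta^{2-\delta})-(A+B\beta^{2-\delta}+\beta)=-\beta(y)<0$, so this surface is an entrance surface and contributes nothing to $W^-$; symmetrically $\frac{d}{dt}(\dot y+\beta(y))\le-\beta(y)<0$ on $\{\dot y=-\beta(y)\}$, so the whole lower velocity boundary is an exit surface. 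On the wall $\{y=0\}$ the flow leaves $W$ for $\dot y<0$ and, because $\hat v(t,0,0)<0$, also for $\dot y=0$, while it enters for $\dot y>0$; hence the exit part of $\{y=0\}$ is $\{\dot y\le0\}$, and likewise that of $\{y=1\}$ is $\{\dot y\ge0\}$. Therefore $W^-_t$ is the same set for every $t\in[0,T)$: the disjoint union of two arcs, namely $\{y=0,\ -\beta(0)\le\dot y\le0\}$ joined to the lower velocity boundary, and $\{y=1,\ 0\le\dot y\le\beta(1)\}$. So $W$ is a Wa{\.z}ewski block, and with $W^{--}=[0,T]\times W^-_0$ and $h=\mathrm{id}$ it is a periodic segment.

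Finally I would run the index count and conclude. By the first remark after Theorem~\ref{main-th} the monodromy is the identity, so $\Lambda(m)-\Lambda(m|_{W^{--}_a})=\chi(W_0)-\chi(W^-_0)=1-2=-1\ne0$, because $W_0$ is a disc and $W^-_0$ is two arcs. Theorem~\ref{main-th} then gives a fixed point of the Poincar\'e map in $U$, i.e.\ a $T$-periodic solution of $\ddot y=\hat v$ remaining in $W\setminus W^{--}$ throughout $[0,T]$. The only points of $\{y=0\}\cup\{y=1\}$ lying in $W\setminus W^{--}$ have $\dot y>0$ on $\{y=0\}$ and $\dot y<0$ on $\{y=1\}$; so if $y(t_0)\in\{0,1\}$ the corresponding sign of $\dot y(t_0)$ would force $y$ outside $[0,1]$ an instant earlier (using $T$-periodicity when $t_0\in\{0,T\}$), contradicting $y([0,T])\subset[0,1]$. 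Hence $0<y(t)<1$ for all $t$, which unwinds to $x_1(t)<x(t)<x_2(t)$. I expect the real obstacle to be exactly the middle step: a flat cut-off $\{\dot y=p\}$ has exit set $\{(t,y)\colon\hat v(t,y,p)>0\}$, which in general is not a product over $t$ of a fixed subset of $\{0\le y\le1\}$ and so wrecks the segment axioms, whereas the one-sided bound $v\le a+b|\dot x|^{2-\delta}$ is precisely what makes the curved barrier $\beta$ available; the remaining points — corner behaviour in the exit set, the ANR/piecewise-smooth hypotheses of Theorem~\ref{main-th}, and (dispatched by altering $\hat v$ far from $W$) global existence for the ambient equation — are routine.
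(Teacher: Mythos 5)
Your proof is correct, and it takes a genuinely different route from the paper's. The paper keeps flat velocity walls $\dot x=\pm p$, picks $p$ via Lemma~\ref{main-lm} (a solution launched with speed near $p$ from the strip shadows a straight line, so it escapes a slightly larger interval before time $T$), and then modifies the equation near $|\dot x|=p$ --- cutting off $v$ and inserting a small friction term --- so that the box becomes a periodic segment for the modified equation; the resulting periodic orbit never reaches the modified high-speed zone and therefore solves the original equation. You instead normalize the strip to $[0,1]$, bend the velocity boundary into a Nagumo-type barrier $\beta$ solving $\beta'=A/\beta+B\beta^{1-\delta}+1$, and verify directly that the resulting $W$ is already a periodic segment for the original, unmodified equation, with exit index $1-2=-1$. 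Your construction is cleaner in two respects: it needs no auxiliary modification of the vector field, and it works with the one-sided bound $v\le a+b|\dot x|^{2-\delta}$ exactly as stated in the theorem (the same upper estimate $\hat v-\beta'\beta\le-\beta<0$ settles both faces $\dot y=\pm\beta(y)$), whereas Lemma~\ref{main-lm}, on which the paper's proof leans, is formulated with the two-sided norm bound $\|v\|\le a+b\|\dot q\|^{2-\delta}$. What the paper's geodesic-shadowing mechanism buys instead is generality: it is exactly what scales to Theorems~\ref{th-sphere} and~\ref{th-n} on manifolds, where a scalar curved barrier in the velocity coordinate has no ready analogue, so the uniform cut-off-plus-friction trick is the one that carries over.
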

\begin{proof}
Let us introduce the following notations $x^- = \min x_1(t)$ and $x^+ = \max x_2(t)$. Let $p > 0$ and $\varepsilon > 0$ be such numbers that any solution of (\ref{eq7}) with initial conditions $x(t_0) \in [x_1(t_0), x_2(t_0)]$ and $\dot x(t_0) \in [-\varepsilon + p, \varepsilon + p]$ or $\dot x(t_0) \in [-\varepsilon - p, -\varepsilon + p]$ for any $t_0 \in [0,T]$ leaves segment $[x^--1, x^++1]$. The existence of such $p$ and $\varepsilon$ follows from Lemma \ref{main-lm} for the standard Euclidean metric on the line. Let us now consider the equation
\begin{align}
\label{eq8-temp}
    \ddot x = v(t, x, \dot x)\chi_{p,\varepsilon}(\dot x),
\end{align}
where $\chi_{p,\varepsilon}(\dot x)$ is a smooth function such that
$$
\chi_{p,\varepsilon}(\dot x) = 
\begin{cases}
1, \quad \dot x \in (-\infty, -p-\varepsilon] \cup [-p+\varepsilon, p-\varepsilon] \cup [p + \varepsilon, \infty),\\
0, \quad \dot x \in [-p-\frac{\varepsilon}{2}, -p+\frac{\varepsilon}{2}] \cup [p-\frac{\varepsilon}{2}, p+\frac{\varepsilon}{2}],\\
\mbox{monotone in all other intervals.}
\end{cases}
$$
In other words, equation (\ref{eq8-temp}) everywhere coincides with the original one except for some subset of the extended phase space for which, in particular, we make the right hand side of (\ref{eq7}) equal to zero. For (\ref{eq8-temp}) we have
$$
\| v(t, x, \dot x)\chi_{p,\varepsilon}(\dot x) \| \leqslant \| v(t, x, \dot x) \|.
$$
Hence, every solution of (\ref{eq8-temp}) with initial conditions $x(t_0) \in [x_1(t_0), x_2(t_0)]$ and $\dot x(t_0) \in [-\varepsilon + p, \varepsilon + p]$ or $\dot x(t_0) \in [-\varepsilon - p, -\varepsilon + p]$ for any $t_0 \in [0,T]$ also leaves $[x^-, x^+]$.

Let us consider another modified equation:
\begin{align}
\label{eq8-temp2}
    \ddot x = v(t, x, \dot x)\chi_{p,\varepsilon}(\dot x) - \mu\dot x (1 - \chi_{p,\varepsilon}(\dot x)),
\end{align}
where $\mu > 0$ is a small number such that any solution of (\ref{eq8-temp2}) with initial conditions $x(t_0) \in [x_1(t_0), x_2(t_0)]$ and $\dot x(t_0) \in [-\varepsilon + p, \varepsilon + p]$ or $\dot x(t_0) \in [-\varepsilon - p, -\varepsilon + p]$ leaves $[x^-, x^+]$. The existence of such $\mu$ follows from the continuous dependence of solutions from the right hand side of the equation.

\begin{figure}[h!]
\centering
\def\svgwidth{200 pt}
\begingroup%
  \makeatletter%
  \providecommand\color[2][]{%
    \errmessage{(Inkscape) Color is used for the text in Inkscape, but the package 'color.sty' is not loaded}%
    \renewcommand\color[2][]{}%
  }%
  \providecommand\transparent[1]{%
    \errmessage{(Inkscape) Transparency is used (non-zero) for the text in Inkscape, but the package 'transparent.sty' is not loaded}%
    \renewcommand\transparent[1]{}%
  }%
  \providecommand\rotatebox[2]{#2}%
  \newcommand*\fsize{\dimexpr\f@size pt\relax}%
  \newcommand*\lineheight[1]{\fontsize{\fsize}{#1\fsize}\selectfont}%
  \ifx\svgwidth\undefined%
    \setlength{\unitlength}{500.48437214bp}%
    \ifx\svgscale\undefined%
      \relax%
    \else%
      \setlength{\unitlength}{\unitlength * \real{\svgscale}}%
    \fi%
  \else%
    \setlength{\unitlength}{\svgwidth}%
  \fi%
  \global\let\svgwidth\undefined%
  \global\let\svgscale\undefined%
  \makeatother%
  \begin{picture}(1,0.68024798)%
    \lineheight{1}%
    \setlength\tabcolsep{0pt}%
    \put(0,0){\includegraphics[width=\unitlength,page=1]{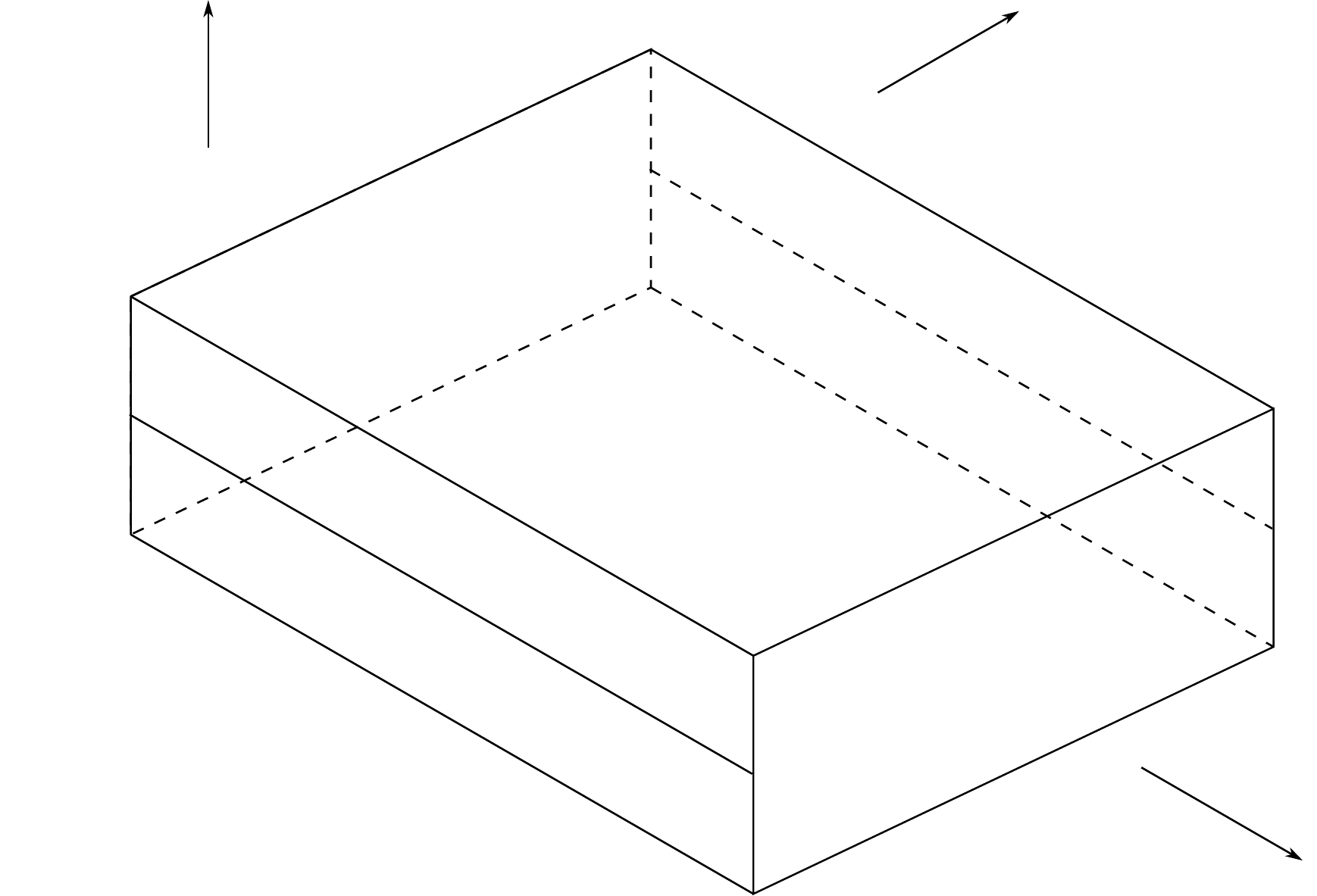}}%
    \put(0.7212182,0.60664431){\color[rgb]{0,0,0}\makebox(0,0)[lt]{\lineheight{1.25}\smash{\begin{tabular}[t]{l}$q$\end{tabular}}}}%
    \put(0.17317765,0.61413705){\color[rgb]{0,0,0}\makebox(0,0)[lt]{\lineheight{1.25}\smash{\begin{tabular}[t]{l}$\dot q$\end{tabular}}}}%
    \put(0.96205629,0.0586038){\color[rgb]{0,0,0}\makebox(0,0)[lt]{\lineheight{1.25}\smash{\begin{tabular}[t]{l}$t$\end{tabular}}}}%
    \put(0,0){\includegraphics[width=\unitlength,page=2]{2019-10-W-II.pdf}}%
    \put(0.04319103,0.42026493){\color[rgb]{0,0,0}\makebox(0,0)[lt]{\lineheight{1.25}\smash{\begin{tabular}[t]{l}$p$\end{tabular}}}}%
    \put(-0.00199026,0.28649418){\color[rgb]{0,0,0}\makebox(0,0)[lt]{\lineheight{1.25}\smash{\begin{tabular}[t]{l}$-p$\end{tabular}}}}%
  \end{picture}%
\endgroup%

\caption{Solutions with large $|\dot q(0)|$ leave $[x^-, x^+]$.}
\label{fig1}
\end{figure}

Let us consider a periodic segment $W$ for modified equation (\ref{eq8-temp2}):
\begin{align}
    W = \{ t, x, \dot x \colon t \in [0,T], x \in [x_1(t), x_2(t)], \dot x \in [-p, p] \}.
\end{align}
From \ref{main-th} it follows that there exists a $T$-periodic solution of (\ref{eq8-temp2}) that never leaves $W$. The proof is similar to the proofs in \cite{polekhin2014examples,polekhin2014periodic,polekhin2015forced}. This solution cannot be in points where $\sigma_{p, \varepsilon} \neq 0$. Indeed, in this case, the solution leaves $[x^-,x^+]$ and, hence, leaves $W$. From the contradiction we obtain that this $T$-periodic solution of (\ref{eq8-temp2}) always remains in the region where $\sigma_{p, \varepsilon} = 0$, i.e. this solution is a solution for the original equation (\ref{eq7}).
\end{proof}

The main idea of the proof can be explained as follows. When one considers the system with friction, it is possible to use results from \cite{srzednicki1994periodic}. However, the existence of a periodic solutions follows from the behaviour of the vector field in a vicinity of the boundary of segment $W$ and does not depend on the fact whether there is friction inside segment $W$ or not. Therefore, it is sufficient to introduce the friction only for relatively large (in absolute values) velocities. Then it is possible to show that any solution with large velocity leaves our segment $W$ and cannot be our $T$-periodic solution. Finally, our $T$-periodic solution always stays in the region without friction.

This result can be generalized for the case of system of $n$ differential equations of second order:
\begin{align}
\label{eq9}
\begin{split}
    &\ddot x_1 = v_1(t, x_1, \dot x_1, x_2, \dot x_2, ..., x_n, \dot x_n),\\
    &\ddot x_2 = v_2(t, x_1, \dot x_1, x_2, \dot x_2, ..., x_n, \dot x_n),\\
    &...\\
    &\ddot x_n = v_n(t, x_1, \dot x_1, x_2, \dot x_2, ..., x_n, \dot x_n).
\end{split}
\end{align}
\begin{theorem}
Let the right hand side of (\ref{eq9}) is a $T$-periodic function of time and there are $2n$ $T$-periodic functions $x_1^n(t)$ and $x_2^n(t)$ such that for all $t$ 
\begin{enumerate}
    \item $x_1^j(t) < x_2^j(t)$, $1 \leqslant j \leqslant n$
    \item $\ddot x_1^j(t) > v_j(t, x_1(t), \dot x_1, x_2(t), \dot x_2, ..., x^j_1(t), \dot x^j_1(t), ..., x_n(t), \dot x_n)$ for any $\dot x_i$ and any $x_i(t) \in [x_1^i(t), x_2^i(t)]$,
    \item $\ddot x_2^j(t) < v_j(t, x_1(t), \dot x_1, x_2(t), \dot x_2, ..., x^j_2(t), \dot x^j_2(t), ..., x_n(t), \dot x_n)$  for any $\dot x_i$ and any $x_i(t) \in [x_1^i(t), x_2^i(t)]$.
\end{enumerate}
Let $\|v(t, x, \dot x)\| \leqslant a + b\|\dot x\|^{2 - \delta}$ for some $a$, $b$ and $\delta > 0$. Then for system (\ref{eq7}) there exists a $T$-periodic solution $$x(t) = (x_1(t), x_2(t), ..., x_n(t))$$ such that $x_1^j(t) < x_j(t) < x_2^j(t)$ for all $t$.
\end{theorem}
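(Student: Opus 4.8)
The plan is to reproduce, in the $2n$-dimensional extended phase space, the argument used for Theorem~\ref{3_2}; the only genuinely new points are an Euler-characteristic computation for a product box and an $n$-dimensional use of Lemma~\ref{main-lm}. Write $x^{j,-}=\min_t x_1^j(t)$, $x^{j,+}=\max_t x_2^j(t)$ and $\tilde B=\prod_{j=1}^n[x^{j,-}-1,\,x^{j,+}+1]$. Applying Lemma~\ref{main-lm} for the Euclidean metric on $\mathbb R^n$ to a compact family of straight-line geodesics issued from $\tilde B$ (chosen so that each leaves $\tilde B$ before geodesic time reaches $T$ while staying in a large fixed ball), one obtains, using only the growth bound \eqref{usl-rost}, numbers $p>\sup_{t,j}\max(|\dot x_1^j(t)|,|\dot x_2^j(t)|)$ and $\varepsilon>0$ such that any solution which at some $t_0\in[0,T]$ lies in $\tilde B$ and has $|\dot x_j(t_0)|\in[p-\varepsilon,p+\varepsilon]$ for some $j$ must subsequently leave $\tilde B$. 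As in the proof of Theorem~\ref{3_2}, replace the right-hand side of the $j$-th equation by $v_j\,\chi_{p,\varepsilon}(\dot x_j)-\mu\dot x_j\bigl(1-\chi_{p,\varepsilon}(\dot x_j)\bigr)$ with the same cut-off $\chi_{p,\varepsilon}$ and $\mu>0$ small; the modified system again satisfies a bound of the form \eqref{usl-rost} (the extra term is linear in $\dot x$, hence dominated), and for small $\mu$ the escape property above survives by continuous dependence on the right-hand side.

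Consider the box
$$
W=\{(t,x,\dot x)\colon t\in[0,T],\ x_j\in[x_1^j(t),x_2^j(t)],\ \dot x_j\in[-p,p],\ j=1,\dots,n\}.
$$
I claim $W$ is a periodic segment for the modified system. Its fibre $W_t=\prod_j R_j(t)$ is a product of planar rectangles varying $T$-periodically with $t$; rescaling each rectangle affinely and interpolating the velocity reparametrisations between the faces $x_j=x_1^j(t)$ and $x_j=x_2^j(t)$ gives a fibrewise homeomorphism $h$ with $\pi_1\circ h=\pi_1$ which is the identity at $t=0$ and, by periodicity of $x_i^j,\dot x_i^j$, at $t=T$. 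On a face $\dot x_j=\pm p$ one has $\ddot x_j=\mp\mu p$, so these are entrance faces; on a face $x_j=x_2^j(t)$ a point with $\dot x_j>\dot x_2^j(t)$ leaves instantly, while at the fold $\dot x_j=\dot x_2^j(t)$ hypothesis~3 (with $\chi_{p,\varepsilon}(\dot x_2^j(t))=1$) yields $\tfrac{d^2}{dt^2}(x_j-x_2^j)>0$, so the fold is an exit point as well; symmetrically for $x_j=x_1^j(t)$ via hypothesis~2. Hence $W^-=W^{--}\cup(\{T\}\times W_T)$ with
$$
W^{--}=\bigcup_{j=1}^n\Bigl(\{x_j=x_1^j(t),\ \dot x_j\le\dot x_1^j(t)\}\cup\{x_j=x_2^j(t),\ \dot x_j\ge\dot x_2^j(t)\}\Bigr)\cap W,
$$
which is compact and satisfies $h([0,T]\times W_a^{--})=W^{--}$; periodicity of $W$ is clear. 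Since $h$ is the identity at $t=0$ and $t=T$, the monodromy $m$ is the identity map (cf.\ the remark after Theorem~\ref{main-th}), so $\Lambda(m)=\chi(W_a)$ and $\Lambda(m|_{W_a^{--}})=\chi(W_a^{--})$.

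It remains to compute these Euler characteristics. Here $W_a=\prod_j R_j$ with each $R_j$ a rectangle, so $\chi(W_a)=1$, while $W_a^{--}=\bigcup_{j}\bigl(E_j\times\prod_{k\ne j}R_k\bigr)$, where $E_j\subset R_j$ is a disjoint union of two segments, $\chi(E_j)=2$. Since $\bigcap_{j\in S}\bigl(E_j\times\prod_{k\ne j}R_k\bigr)=\prod_{j\in S}E_j\times\prod_{k\notin S}R_k$, inclusion–exclusion for the Euler characteristic gives
$$
\chi(W_a^{--})=\sum_{\varnothing\ne S\subseteq\{1,\dots,n\}}(-1)^{|S|+1}2^{|S|}=1-(-1)^n,
$$
so $\Lambda(m)-\Lambda(m|_{W_a^{--}})=(-1)^n\ne0$. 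By Theorem~\ref{main-th} the modified system has a $T$-periodic solution $x(t)$ staying in $W_t\setminus W_t^{--}$ for all $t$. Exactly as for Theorem~\ref{3_2}, this solution cannot have $|\dot x_j(t)|\ge p-\varepsilon$ anywhere, since then it would leave $\tilde B$, contradicting $x(t)\in W_t\subset[0,T]\times\tilde B$; hence $\chi_{p,\varepsilon}(\dot x_j)\equiv1$ along it and it solves the original system~\eqref{eq9}. Moreover avoiding $W^{--}$ forces the strict inequalities $x_1^j(t)<x_j(t)<x_2^j(t)$: a contact $x_j(t)=x_2^j(t)$ from inside $W$ would be a maximum of $x_j-x_2^j$ and would therefore occur with $\dot x_j=\dot x_2^j(t)$, a point of $W^{--}$.

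The step I expect to require the most care is the verification that $W$ is genuinely a segment — that its essential exit set is exactly the $W^{--}$ written above and that the fibrewise homeomorphism $h$ can be chosen to respect $W^{--}$ simultaneously on all faces and their mutual intersections after the friction modification; once this structure is in place, the Euler-characteristic count and the passage back to the original system are routine.
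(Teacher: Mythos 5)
The paper states this theorem immediately after the proof of Theorem~\ref{3_2} with only the remark ``This result can be generalized for the case of system of $n$ differential equations,'' and provides no written proof; so there is no argument in the paper to compare against directly. That said, your proof is a correct and natural carry-over of the Theorem~\ref{3_2} argument, and it supplies precisely the one new ingredient the generalization needs: the Euler-characteristic count for the product block. Your identification of the block $W=\prod_j[x_1^j(t),x_2^j(t)]\times[-p,p]$, the velocity cut-off together with the artificial friction $-\mu\dot x_j(1-\chi_{p,\varepsilon}(\dot x_j))$ so that the $\dot x_j=\pm p$ faces become entrance faces, and the identification of the essential exit set as the union $\bigcup_j\bigl(E_j\times\prod_{k\ne j}R_k\bigr)$ with $\chi(E_j)=2$, all match the pattern of the one-dimensional proof and of the original pendulum segment in Figure~1. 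The inclusion--exclusion giving $\chi(W_a^{--})=1-(-1)^n$, hence $\Lambda(m)-\Lambda(m|_{W_a^{--}})=(-1)^n\ne 0$, is correct (equivalently, the index pair is a smash product of $n$ copies of $(R_j,E_j)\simeq S^1$, so the reduced Euler characteristic is $(-1)^n$), and the passage back to the original equation by the escape argument is the same as in Theorem~\ref{3_2}. Two points deserve a bit more explicit care than you give them, but neither is a gap: (i) the uniform choice of $p,\varepsilon$ in the $n$-dimensional Lemma~\ref{main-lm} application needs a compactness argument over the directions $\dot q_0/\|\dot q_0\|$ with $|\dot x_j^0|$ bounded below in some coordinate, which is routine since all $|\dot x_k|\le p$ on $W$; and (ii) one should take $p-\varepsilon$ strictly greater than $\sup_{t,j}|\dot x_i^j(t)|$ so that $\chi_{p,\varepsilon}\equiv 1$ at the fold points $\dot x_j=\dot x_i^j(t)$ where hypotheses~2 and~3 are invoked — you almost say this, but the stated inequality is $p>\sup$, not $p-\varepsilon>\sup$. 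With those small tightenings, your argument is sound and is the expected proof of the theorem.
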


Similar result can be proved for a system with two degrees of freedom.
\begin{theorem}
\label{th-sphere}
Let us have a simply connected open region $D \subset \mathbb{R}^2$ with a smooth boundary. Let $L_2$ be a quadratic form (\ref{quad-form}) on $\mathbb{R}^2$. Let the trajectory of any solution of
\begin{align}
    \nabla_{\dot q}\dot q = 0
\end{align}
such that $\langle \dot q(0), \dot q(0) \rangle = 1$ and $q(0) \in D$ leave some open neighborhood of $D$ in time not more than $\tau$, one for all initial conditions. Let us consider the equation
\begin{align}
\label{eq18}
    \nabla_{\dot q}\dot q = v(t, q, \dot q),
\end{align}
where $v(t, q, \dot q)$ is a $T$-periodic in time function such that $$\|v(t, x, \dot x)\| \leqslant a + b\|\dot x\|^{2 - \delta}$$ for some $a$, $b$ and $\delta > 0$. Let all solution, that are tangent to boundary $\partial D$ at time $t_0$ (for any $t_0$), locally leave $D \cup \partial D$. Then there is a $T$-periodic solution in $D$.
\end{theorem}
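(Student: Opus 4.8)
The plan is to imitate the proof of Theorem~\ref{3_2}: replace (\ref{eq18}) by an equation that agrees with it at moderate speeds but carries a viscous term at large speeds, verify that a suitable box in the extended phase space is a periodic segment for that equation so that Theorem~\ref{main-th} applies, and finally show that the periodic solution produced never enters the large-speed regime, hence solves (\ref{eq18}). To this end I would first record an escape estimate: a geodesic of $\langle\cdot,\cdot\rangle$ of speed $s$ is a time rescaling of a unit-speed one, so by hypothesis every geodesic with $q(0)\in\overline D$ and speed $s$ leaves the open neighbourhood $\mathcal N\supseteq\overline D$ furnished by the theorem within time $\tau/s$. Now fix a smooth $\chi=\chi_{p,\varepsilon}\colon[0,\infty)\to[0,1]$ with $\chi\equiv1$ on $[0,p-\varepsilon]$ and $\chi\equiv0$ near $p$, and consider
\begin{equation}
\label{eq-mod}
\nabla_{\dot q}\dot q=v(t,q,\dot q)\,\chi(\|\dot q\|)-\mu\,\dot q\,\bigl(1-\chi(\|\dot q\|)\bigr).
\end{equation}
Its right-hand side again satisfies a bound $a'+b'\|\dot q\|^{2-\delta'}$, so Lemma~\ref{main-lm} applies to it; comparing its solutions with geodesics and using the escape estimate (uniformly, by compactness of the unit sphere bundle over $\overline D$), I would choose $p$ large and then $\varepsilon\in(0,p)$ and $\mu>0$ small so that every solution of (\ref{eq-mod}) with $q(t_0)\in\overline D$ and $\|\dot q(t_0)\|\in[p-\varepsilon,p]$ leaves $\overline D$ after an arbitrarily short time. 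Observe also that on $\|\dot q\|=p$ one has $\chi=0$, whence (\ref{eq-mod}) reads $\nabla_{\dot q}\dot q=-\mu\dot q$ and $\frac{d}{dt}\|\dot q\|^{2}=-2\mu\|\dot q\|^{2}<0$ there.

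Next I would take $W=\{(t,q,\dot q)\colon t\in[0,T],\ q\in\overline D,\ \|\dot q\|\le p\}=[0,T]\times Z$ with $Z=\overline D\times\{\|\dot q\|\le p\}$, which is compact, and show that its exit set for (\ref{eq-mod}) is $W^{-}=\bigl([0,T]\times A\bigr)\cup\bigl(\{T\}\times W_T\bigr)$, where $A=\{(q,\dot q)\colon q\in\partial D,\ \langle n(q),\dot q\rangle\ge0,\ \|\dot q\|\le p\}$ and $n$ is the outer unit normal of $\partial D$: a point over the interior of $D$ is followed inside $W$ for a short time (on $\|\dot q\|=p$ the speed decreases, so the orbit does not exit through the velocity boundary); a point over $\partial D$ with $\langle n,\dot q\rangle<0$ enters $D$; a point over $\partial D$ with $\langle n,\dot q\rangle\ge0$ leaves $\overline D$ at once — for $\|\dot q\|\le p-\varepsilon$ by the theorem's hypothesis on (\ref{eq18}), which (\ref{eq-mod}) reproduces there, and for $\|\dot q\|\in(p-\varepsilon,p]$ because such an orbit is $C^{0}$-close to a geodesic tangent to $\partial D$ and hence cannot return into $D$. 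Then $W^{-}$ is compact, $W$ is a Wa{\.z}ewski block, and with $W^{--}=[0,T]\times A$ the pair $(W,W^{--})$ is a periodic segment of the product type of the first Remark after Theorem~\ref{main-th} (one may take $h=\mathrm{id}$), with monodromy $m=\mathrm{id}$; hence
\begin{equation*}
\Lambda(m)-\Lambda(m|_{W_0^{--}})=\chi(Z)-\chi(A).
\end{equation*}
Since $D$ is a bounded simply connected planar domain with smooth boundary, $\overline D$ is a closed disk, so $Z$ is contractible and $\chi(Z)=1$, while $A$ deformation retracts onto its zero section $\{(q,0)\colon q\in\partial D\}\cong S^{1}$ by $(q,\dot q)\mapsto(q,(1-s)\dot q)$, so $\chi(A)=0$. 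Thus $\chi(Z)-\chi(A)=1\ne0$ and Theorem~\ref{main-th} furnishes a $T$-periodic solution $q(\cdot)$ of (\ref{eq-mod}) with $(t,q(t),\dot q(t))\in W_t\setminus W_t^{--}$ for all $t$.

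It then remains to see that this solution solves (\ref{eq18}). If $q(t_0)\in\partial D$ for some $t_0$, then $\langle n(q(t_0)),\dot q(t_0)\rangle<0$, so the signed distance of $q(\cdot)$ to $\partial D$ is positive just before $t_0$, i.e.\ $q$ is outside $\overline D$ then — using $T$-periodicity if $t_0\in\{0,T\}$ — contradicting that the orbit stays in $W$; hence $q(t)\in D$ for all $t$. If $\|\dot q(t_1)\|\ge p-\varepsilon$ for some $t_1$, then, as $\|\dot q(t_1)\|\le p$, the escape estimate forces $q$ out of $\overline D$, again impossible; so $\|\dot q(t)\|<p-\varepsilon$ for all $t$, where (\ref{eq-mod}) coincides with (\ref{eq18}), and $q(\cdot)$ is the required $T$-periodic solution lying in $D$.

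The step I expect to be the main obstacle is the exit-set claim on the lateral boundary at large speeds $\|\dot q\|\in(p-\varepsilon,p]$: there the orbit is only $C^{0}$-close to a geodesic tangent to $\partial D$, the tangency is second order, and the hypothesis on (\ref{eq18}) constrains those geodesics only in the large-speed limit, so turning $C^{0}$-closeness into a genuine ``no return to $D$'' statement needs care — it may be cleanest to switch the viscous term off in a thin collar of $\partial D$ (where the escape estimate for (\ref{eq18}) itself is available), at the cost of a more elaborate description of $W^{-}$ in that collar. Note finally that simple connectedness of $D$ is used essentially: it is what makes $\overline D$ a disk and hence $\chi(Z)-\chi(A)=1$ — for an annulus this difference would vanish.
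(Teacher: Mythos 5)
Your argument reproduces the paper's proof in all essentials: the same high-speed modification of the right-hand side, the same compact block $W=[0,T]\times\overline D\times\{\|\dot q\|\le p\}$ (you correctly write $\le$ where the paper's display for $W$ has what must be a typo with $=$), the same identification of the essential exit set with a half-disk bundle over $\partial D\simeq S^{1}$, hence $\chi(W_0)-\chi(W^{--}_0)=1-0=1$, and the same exclusion argument showing the resulting $T$-periodic orbit never enters the modified region. So this is the paper's proof, with more of the details spelled out than in the original.

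One comment on the step you flag. The difficulty — whether solutions of the \emph{modified} equation that are tangent to $\partial D$ at speeds in $(p-\varepsilon,p]$ still exit instantly, so that the lateral exit set really is all of $\{q\in\partial D,\ \langle n,\dot q\rangle\ge 0\}$ — is also passed over in the paper, but you have created it for yourself by multiplying $v$ by $\chi$, which suppresses the normal component of the forcing near $\|\dot q\|=p$. It disappears, and the collar surgery you suggest becomes unnecessary, if you only add a tangential viscous term without scaling $v$: take
\begin{equation*}
\tilde v(t,q,\dot q)=v(t,q,\dot q)-\mu\,\dot q\,\bigl(1-\chi(\|\dot q\|)\bigr).
\end{equation*}
The added term is parallel to $\dot q$, hence tangent to $\partial D$ at every boundary tangency, so on $\partial D$ the normal component of $\tilde v$ is exactly that of $v$ and the theorem's boundary hypothesis transfers verbatim to the modified system at all speeds. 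The growth bound is still of the required form $a'+b'\|\dot q\|^{2-\delta}$, and $\nabla_{\dot q}\langle\dot q,\dot q\rangle=2\langle\dot q,v\rangle-2\mu\langle\dot q,\dot q\rangle<0$ on $\|\dot q\|^2=2p$ for suitable $p$ and $\mu$, so all the remaining steps of your argument (closeness to geodesics via Lemma~\ref{main-lm}, escape from a neighborhood of $\overline D$, the product structure of the periodic segment, the Euler characteristic count, and the final exclusion) go through unchanged.
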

\begin{proof}
The general scheme of the proof is the same as for Theorem~\ref{3_2} and here we present the proof without details. Let $p$ and $\varepsilon$ be real numbers such that any solution $q(t)$ of (\ref{eq18}) satisying 
$$
q(t_0) = q_0 \in D \cup \partial D, \quad \dot q(t_0) = \dot q_0, \quad 2(p-\varepsilon) \leqslant \langle \dot q_0, \dot q_0 \rangle \leqslant 2(p + \varepsilon)
$$
leaves some open neighborhood of $D$. Let us denote the set of such  $q_0$ and $\dot q_0$ by $X$. Now we can consider the modified equation that differs from the original one only in $X$:
\begin{align}
\label{eq19}
    \nabla_{\dot q}\dot q = \tilde v(t, q, \dot q),
\end{align}
We assume that smooth function $\tilde v(t, q, \dot q)$ satisfies the following condition: $\nabla_{\dot q} \langle \dot q, \dot q \rangle < 0$ for $\langle \dot q, \dot q \rangle = 2p$. The modified equation can always be chosen in such a way that any its solution, with initial conditions in $X$, leaves a neighborhood of $D$ in finite time. Let us consider the set
\begin{align}
    W = \{ t, q, \dot q \colon t \in [0, T], q \in D \cup \partial D, \langle \dot q, \dot q \rangle = 2p \}.
\end{align}
For equation (\ref{eq19}) set $W$ is a periodic segment. The exit set is homotopic to a circle. Hence, for the modified equation, there is a $T$-periodic solution that always remains in $W$. This solution cannot go through set $X$ since all solutions starting in $X$ leave $W$. Finally, we obtain that there exists a periodic solution for (\ref{eq18}).
\end{proof} 

Theorem (\ref{th-sphere}) can be generalized to the case of an $n$-dimensional region. The proof is almost the same as for the previous result. The only difference is that the exit set is homotopic to an $(n-1)$-sphere. Its Euler characteristic is $0$ ($n$ is even) or $2$ ($n$ is odd).
\begin{theorem}
\label{th-n}
Let us have a simply connected open region $D \subset \mathbb{R}^n$ with a smooth boundary. Let $L_2$ be a quadratic form (\ref{quad-form}) on $\mathbb{R}^n$. Let the trajectory of any solution of
\begin{align}
    \nabla_{\dot q}\dot q = 0
\end{align}
such that $\langle \dot q(0), \dot q(0) \rangle = 1$ and $q(0) \in D$ leave some open neighborhood of $D$ in time not more than $\tau$, one for all initial conditions. Let us consider the equation
\begin{align}
\label{eq18}
    \nabla_{\dot q}\dot q = v(t, q, \dot q),
\end{align}
where $v(t, q, \dot q)$ is a $T$-periodic in time function such that $$\|v(t, x, \dot x)\| \leqslant a + b\|\dot x\|^{2 - \delta}$$ for some $a$, $b$ and $\delta > 0$. Let all solution, that are tangent to boundary $\partial D$ at time $t_0$ (for any $t_0$), locally leave $D \cup \partial D$. Then there is a $T$-periodic solution in $D$.
\end{theorem}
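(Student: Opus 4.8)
The plan is to run the scheme of Theorems~\ref{3_2} and~\ref{th-sphere}, with the circle bounding the planar region replaced by the sphere $\partial D\cong S^{n-1}$. Fix a large $p>0$ and a small $\varepsilon>0$, and work in global coordinates on $\mathbb{R}^n$ with the metric $\langle\cdot,\cdot\rangle$ determined by $L_2$. I would first invoke Lemma~\ref{main-lm}: after enlarging the coordinate region so that every unit-speed geodesic issuing from $\overline{D}$ stays in it for time $2\tau$, the lemma (using the growth bound $\|v\|\leqslant a+b\|\dot q\|^{2-\delta}$) shows that a solution of (\ref{eq18}) whose initial kinetic energy lies in $[2(p-\varepsilon),2(p+\varepsilon)]$ stays close to the corresponding rescaled geodesic on a time interval that still contains the $O(p^{-1/2})$ time that geodesic needs to escape a fixed open neighbourhood $D'\supset\overline{D}$. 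Hence, for $p$ large (uniformly in $q_0\in\overline{D}$ and in $t_0\in[0,T]$, by compactness), every such solution leaves $D'$ within $[0,T]$.

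Next I would modify the equation exactly as in the proof of Theorem~\ref{th-sphere}: replace $v$ by a smooth, $T$-periodic $\tilde v$ coinciding with $v$ outside the thin shell $X=\{(q,\dot q)\colon q\in\overline{D},\ 2(p-\varepsilon)\leqslant\langle\dot q,\dot q\rangle\leqslant2(p+\varepsilon)\}$, differing from $v$ there only by a dissipative term of small norm (so that $\|\tilde v\|\leqslant\|v\|+\mu\|\dot q\|$ with $\mu$ as small as we wish, cf. (\ref{eq8-temp2})), and satisfying $\langle\tilde v(t,q,\dot q),\dot q\rangle<0$ whenever $\langle\dot q,\dot q\rangle=2p$. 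By the remark following Lemma~\ref{main-lm} (continuous dependence on the right-hand side, the norm increased only by the controllably small perturbation), for $p$ large and $\mu$ small every solution of $\nabla_{\dot q}\dot q=\tilde v(t,q,\dot q)$ that enters $X$ still leaves $D'$ within $[0,T]$.

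Then I would take
$$
W=\bigl\{(t,q,\dot q)\colon t\in[0,T],\ q\in\overline{D},\ \langle\dot q,\dot q\rangle\leqslant 2p\bigr\}
$$
and verify that it is a periodic segment for the modified equation. Compactness of $W$ and $W^-$ is immediate; exit across the kinetic-energy cap $\{\langle\dot q,\dot q\rangle=2p\}$ is impossible since $\langle\tilde v,\dot q\rangle<0$ there, and exit across $\{q\in\partial D\}$ is governed, for every $t_0$, by the hypothesis that solutions tangent to $\partial D$ locally leave $D\cup\partial D$. Consequently the exit set is the $t$-independent subbundle $\{(q,\dot q)\colon q\in\partial D,\ \langle\dot q,n(q)\rangle\geqslant0,\ \langle\dot q,\dot q\rangle\leqslant2p\}$ ($n$ the outward normal), so $W^{--}=[0,T]\times W_a^-$ and, by the remark after Theorem~\ref{main-th}, the monodromy $m$ is the identity and $\Lambda(m)-\Lambda(m|_{W_a^{--}})=\chi(W_a)-\chi(W_a^{--})$. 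Since $D$ is simply connected with smooth boundary, $W_a$ retracts onto the zero section $\overline{D}\simeq\mathrm{pt}$, whence $\chi(W_a)=1$, while $W_a^{--}$ retracts onto $\partial D\cong S^{n-1}$, whence $\chi(W_a^{--})=1+(-1)^{n-1}$ (that is, $0$ for $n$ even, $2$ for $n$ odd); so $\Lambda(m)-\Lambda(m|_{W_a^{--}})=(-1)^n\neq0$. Theorem~\ref{main-th} then produces a $T$-periodic solution of the modified equation whose trajectory stays in $W\setminus W^{--}$, in particular with $q(t)\in D$ for all $t$.

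Finally, this solution never meets $X$: a solution entering $X$ leaves $D'$, which contains the $q$-projection of $W$, hence leaves $W$ — a contradiction. Therefore $\tilde v=v$ along it and it is a genuine $T$-periodic solution of (\ref{eq18}) lying in $D$. The main obstacle, as in \cite{polekhin2014examples,polekhin2015forced}, is the rigorous verification that $W$ is a periodic segment in the precise sense of the definition preceding Theorem~\ref{main-th} — constructing the trivialising homeomorphism $h\colon[0,T]\times W_a\to W$ with $\pi_1\circ h=\pi_1$ and (\ref{cond-2}), and checking the essential-exit-set identities — which is exactly the place where the uniform local-exit hypothesis is used; the subsidiary points (the topological identifications $\overline{D}\simeq\mathrm{pt}$ and $\partial D\cong S^{n-1}$, and the uniform choice of $p$ over $\overline{D}\times[0,T]$) are routine under the stated assumptions.
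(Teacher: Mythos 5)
Your proposal follows essentially the same route as the paper: the paper proves Theorem~\ref{th-n} by declaring the construction identical to that of Theorem~\ref{th-sphere} (modify the vector field in a high kinetic-energy shell using Lemma~\ref{main-lm} and the growth bound, take the Wa{\.z}ewski block $W$ bounded by $\partial D$ and the energy cap $\langle\dot q,\dot q\rangle\leqslant 2p$, apply Theorem~\ref{main-th}, and exclude the modified region a posteriori), noting only that the exit set is now homotopy equivalent to $S^{n-1}$ with $\chi=1+(-1)^{n-1}$. Your Euler-characteristic bookkeeping ($\chi(W_a)-\chi(W_a^{--})=(-1)^n\neq0$) matches, you correctly read the paper's $\langle\dot q,\dot q\rangle=2p$ in the definition of $W$ as $\leqslant 2p$, and the remaining deferred verifications (the trivializing homeomorphism $h$, uniformity of $p$ over $\overline{D}\times[0,T]$) are exactly the ones the paper itself leaves implicit.
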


\section{Examples}
\subsection{Pendulum with a feedback control}
We have already considered the equation of motion for a pendulum with a moving pivot point (\ref{eq6}). Let us now consider a more general equation
\begin{align}
    \ddot q = f(t,q,\dot q) \cdot \sin q - \cos q.
\end{align}
This equation describes the dynamics of a pendulum in an external horizontal force field $f$. The same equation describes the motion of a feedback controlled pendulum when the feedback control is realized by a horizontal motion of the pivot (we suppose that the mass of the pendulum is much less than the mass of the support cart \cite{polekhin2018topological}).

Let $f(t,q,\dot q)$ be a $T$-periodic bounded function. The boundedness of $f$ is natural for control problems since our feedback forces, in practice, cannot be infinite. Note that, in the general case, our equations cannot be rewritten as Lagrange equations, i.e. the dynamics is not defined by the Hamilton's variational principle.

From Theorem~\ref{3_2}, we have that for this system there always exists a $T$-periodic solution. Indeed, one can consider functions $s_1(t) = 0$ and $s_2(t) = \pi$ and they satisfy the conditions of the theorem. Moreover, it is easy to see that there is a periodic solution along which the pendulum never falls.

\subsection{Spherical pendulum}
Similar result can be proved for a spherical pendulum. Let us have a a force of gravity and a horizontal $T$-periodic function of time $F(t, r, \dot r) = F_x e_x + F_y e_y$ acting on the pendulum. Here $e_x$ and $e_y$ are unit vectors in the horizontal plane. We assume that $F_x(t, r, \dot r)$ and $F_y(t, r, \dot r)$ are bounded. If the mass of the pendulum and the gravity acceleration equal $1$, then the equation of motion has the form
\begin{align}
    \ddot r = -e_{z} + F + F_{\textup{constraint}},
\end{align}
where $F_{constraint}$ is the force of constraint directed along the normal vector, $e_z$ is the vertical vector, $r \in \mathbb{R}^3$. We also assume that the length of the rod equals $1$. Let us consider the following subset of the configuration space:
\begin{align}
    G = \{ r \colon (r, e_z) > \varepsilon \} \quad\mbox{for some}\quad \varepsilon > 0.
\end{align}
Since $F$ is bounded, then for small $\varepsilon$ we have that the corresponding solutions are externally tangent to $G$. The geodesics are  the arcs of large circles, therefore, any geodesic starting in $G$ with unit velocity reaches the horizontal large circle (equator) in time no more than $\pi$. We can apply Theorem~\ref{th-sphere} and obtain that there exists a $T$-periodic solution that always remains in $G$.

\subsection{Mass point on a rotating curve}
Let us consider the following mechanical system. Let us have a closed smooth strictly convex curve $\gamma$ in a vertical plane. Let this curve rotate around in the vertical plane around some fixed point (Fig.~\ref{fig1}). We assume that the law of rotation $\varphi$ is a $T$-periodic function. We also assume that there is a unit mass point sliding along the curve without any friction.
\begin{figure}[h!]
\centering
\def\svgwidth{250 pt}
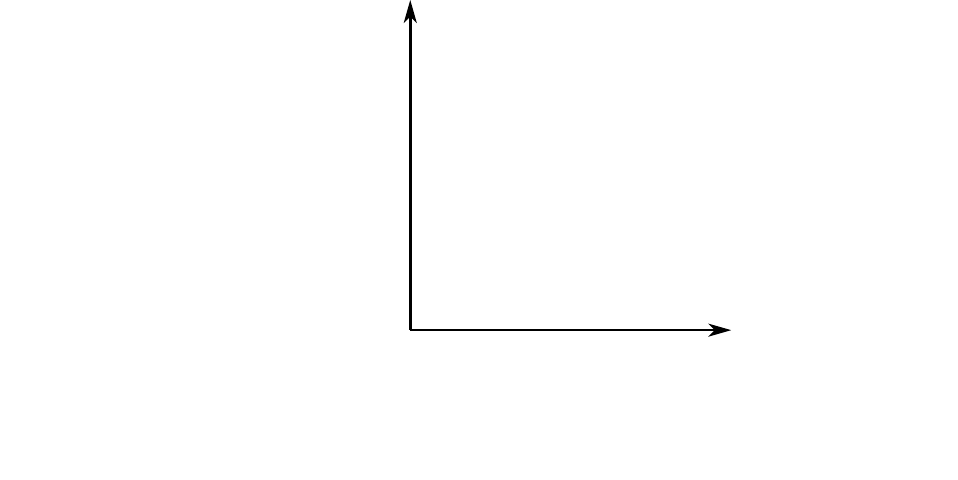
\caption{Smooth curve in a vertical plane.}
\label{fig1}
\end{figure}
Let $s$ be a natural parameter on curve $\gamma$ that is given by its components $\xi(s)$ and $\eta(s)$ in Cartesian coordinate system $O\xi\eta$. The kinetic energy has the form:
\begin{align}
    T = \frac{1}{2}\left( \dot s^2 + \dot\varphi^2(\xi^2 + \eta^2) + 2\dot\varphi\dot s (\xi\eta' - \eta\xi') \right).
\end{align}
If the gravity acceleration equals $1$, then the potential of system has the form:
\begin{align}
    V = - (\xi\sin\varphi + \eta \cos\varphi).
\end{align}
The equation of motion:
\begin{align}
\label{ex_3}
    \ddot s + \ddot\varphi(\xi\eta' - \eta\xi') - \dot\varphi^2(\xi\xi' + \eta\eta') +  (\xi'\sin\varphi + \eta' \cos\varphi) = 0.
\end{align}
For a given $t$, there are two points $s_1(t)$ and $s_2(t)$ on the curve such that
\begin{align}
    \begin{split}
        & \xi' (s_1(t)) \sin\varphi(t) + \eta'(s_1(t)) \cos\varphi(t) = -1,\\
        & \xi' (s_2(t)) \sin\varphi(t) + \eta'(s_2(t)) \cos\varphi(t) = 1.
    \end{split}
\end{align}
We assume that the orientation of the curve is chosen in such a way that
 $$s_1(t) < s_2(t).$$ Functions $s_1(t)$ and $s_2(t)$ are smooth. Since $\xi(s)$, $\eta(s)$, $\xi'(s)$, $\eta'(s)$ are bounded, then if $|\dot\varphi(t)|$ and $|\ddot\varphi(t)|$ are small for all $t$, then we can apply Theorem~\ref{3_2}. We obtain that, for system (\ref{ex_3}), there exists a $T$-periodic solution that always remains between $s_1(t)$ and $s_2(t)$. In other words, for relatively slow and smooth rotation, there exists a $T$-periodic solution such that along this solution the mass point is always in `the top part of the curve'.

Similar result can be proved without the assumption of convexity of the curve. For instance, one can assume that function $\varphi(t)$ satisfies some additional assumptions guaranteeing that functions $s_1(t)$ and $s_2(t)$ are smooth.

For instance, let us consider the following curve (Fig. \ref{fig2}). If one rotates this curve uniformly, then $s_1(t)$ and $s_2(t)$ becomes discontinuous. However, if we rotates it in a special manner, the functions can be continuous. For instance, we can define $\varphi(t)$ as the angle between the vertical and the tangent vector when we uniformly moves along the curve (Fig. \ref{fig2}). Then, for small velocity and acceleration of rotation, there exists a $T$-periodic solution with the same properties as above.

\begin{figure}[h!]
\centering
\def\svgwidth{160 pt}
\begingroup%
  \makeatletter%
  \providecommand\color[2][]{%
    \errmessage{(Inkscape) Color is used for the text in Inkscape, but the package 'color.sty' is not loaded}%
    \renewcommand\color[2][]{}%
  }%
  \providecommand\transparent[1]{%
    \errmessage{(Inkscape) Transparency is used (non-zero) for the text in Inkscape, but the package 'transparent.sty' is not loaded}%
    \renewcommand\transparent[1]{}%
  }%
  \providecommand\rotatebox[2]{#2}%
  \newcommand*\fsize{\dimexpr\f@size pt\relax}%
  \newcommand*\lineheight[1]{\fontsize{\fsize}{#1\fsize}\selectfont}%
  \ifx\svgwidth\undefined%
    \setlength{\unitlength}{185.09487279bp}%
    \ifx\svgscale\undefined%
      \relax%
    \else%
      \setlength{\unitlength}{\unitlength * \real{\svgscale}}%
    \fi%
  \else%
    \setlength{\unitlength}{\svgwidth}%
  \fi%
  \global\let\svgwidth\undefined%
  \global\let\svgscale\undefined%
  \makeatother%
  \begin{picture}(1,0.63410209)%
    \lineheight{1}%
    \setlength\tabcolsep{0pt}%
    \put(0,0){\includegraphics[width=\unitlength,page=1]{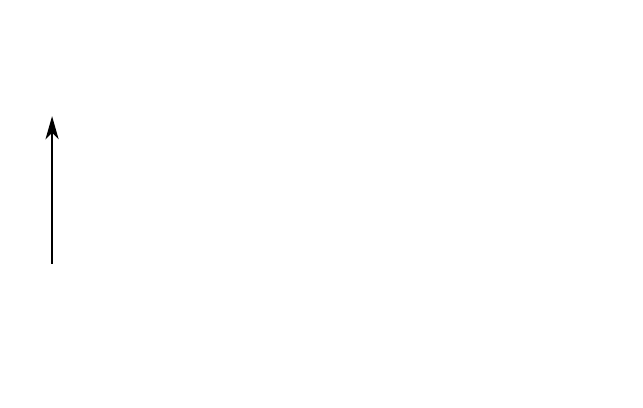}}%
    \put(-0.00358769,0.33052379){\color[rgb]{0,0,0}\makebox(0,0)[lt]{\lineheight{1.25}\smash{\begin{tabular}[t]{l}$v_0$\end{tabular}}}}%
    \put(0.31767607,0.49260283){\color[rgb]{0,0,0}\makebox(0,0)[lt]{\lineheight{1.25}\smash{\begin{tabular}[t]{l}$v_t$\end{tabular}}}}%
    \put(0.16717418,0.57364236){\color[rgb]{0,0,0}\makebox(0,0)[lt]{\lineheight{1.25}\smash{\begin{tabular}[t]{l}$\varphi(t)$\end{tabular}}}}%
    \put(0,0){\includegraphics[width=\unitlength,page=2]{2019-10-curve-2.pdf}}%
  \end{picture}%
\endgroup%

\caption{Non-convex curve in a vertical plane.}
\label{fig2}
\end{figure}

\subsection{Nonlinear chain in an external field}
Let us have a finite chain of particles on a line. Let there be $n+2$ particles and the leftmost and rightmost particles are fixed in points $x = 0$ and $x = 2(n+1)$. The potential of interaction has the form
$$
V(x) = \frac{1}{2}\left( 1 - e^{-(x-1)} \right)^2.
$$
This potential is called the Morse potential: when the distance between two particles is less than $1$, they repulse. Otherwise, they attract each other. Starting from some distance, the force of interaction is decreasing monotonously.  

Let us now have the following equation
$$
\ddot x_i = -\frac{\partial V}{\partial x}\Big|_{x = x_{i+1} - x_i} + \frac{\partial V}{\partial x}\Big|_{x = x_{i} - x_{i-1}} + F(t, x_i), \quad i = 1,...,n.
$$
Where $F(t, x)$ is an external field in point $x$ at time $t$. We assume that $F(t+T,x) = F(t,x)$ and:
\begin{itemize}
    \item $F(t,2(k+1)) < 0$ for all $t$,
    \item $F(t,2k) > 0$ for all $t$.
\end{itemize}
Then there exists a $T$-periodic solution such that $$x_i(t) \in (2(2i - 1), 4i)$$ fora all $i$ and all $t$. It follows from Theorem 3.3 and the observation that for $x_i = 2(2i-1)$ we always have $\ddot x_i < 0$, since the $(i-1)$-th point attracts the $i$-th point not weaker than the $(i+1)$-th point. Similarly, for $x_i = 4i$ we have $\ddot x_i > 0$.

\section{Conclusion}
In conclusion we would like to say something about the connection of the above results with the previously obtained in the literature and to comment our examples.

Theorem~\ref{3_2} is a special case of a more general result on the existence of a solution for the following boundary value problem
$$
u'' = f(t, u, u'), \quad u(a) = u(b), \quad u'(a) = a'(b).
$$
If there exist smooth functions $\alpha(t)$, $\beta(t)$ and a continuous function $\varphi \colon \mathbb{R}^+ \to \mathbb{R}$ such that
\begin{align}
    \begin{split}
        & \alpha(t) \leqslant \beta(t),\\
        & \alpha''(t) \geqslant f(t, \alpha(t), \alpha'(t)), \quad \beta''(t) \leqslant f(t, \beta(t), \beta'(t)),\\
        & |f(t, u, v)| \leqslant \varphi(|v|), \quad \int\limits_0^\infty \frac{s\,ds}{\varphi(s)} = \infty,
    \end{split}
\end{align}
then the problem has a solution. The condition on the right hand side of the equation is called Nagumo condition. This condition allows one to obtain a priori estimates for solutions. In our results, similar condition has been obtained from the geometrical considerations. In a similar form this condition was initially proposed in \cite{bernstein1904} (also, see \cite{de2006two}).

Taking into account topological and geometrical considerations, we prove the existence of a periodic solution for more complex systems, defined on a manifold. In particular, we do not know any results on the existence of a periodic solution for a non-Hamiltonian spherical pendulum in an external field proved using Nagumo condition. The proofs for the pendulum with a moving support point can be found in \cite{srzednicki2019periodic}. This result can also be obtained from a result in \cite{bolotin2015calculus}. 

The above methods and ideas can be applied to various mechanical systems. Some of them are considered above. For instance, the result on the existence of a periodic solution for the point moving along a rotating curve can be applied to the so-called butterfly robot, a system that has been intensively studied recently \cite{cefalo2006energy,surov2015case,lynch1998roles}. The same methods can be applied to a rotating smooth surface with a mass point moving on it and one can expect the existence of a periodic solution provided the rotation is slow and smooth.

\section*{Acknowledgement}
This research was supported by a grant of the Russian Science Foundation (Project No. 19-71-30012).

\bibliographystyle{model1-num-names}
\bibliography{sample}







\end{document}